\newtheorem{thm}{Theorem}
\newtheorem{con}[thm]{Conjecture}
\newtheorem{lem}{Lemma}
\begin{document}
\title{Nordhaus-Gaddum and other bounds for the sum of squares of the positive eigenvalues of a graph}
\author{Clive Elphick\thanks{\texttt{clive.elphick@gmail.com}}\quad\quad and Mustapha Aouchiche\thanks{\texttt{mustapha.aouchiche@gerad.ca}}}
\maketitle
\begin{abstract}

Terpai \cite{terpai11} proved the Nordhaus-Gaddum bound that $\mu(G) + \mu(\overline{G}) \le 4n/3 - 1$, where $\mu(G)$ is the spectral radius of a graph $G$ with $n$ vertices. Let $s^+$ denote the sum of the squares of the positive eigenvalues of $G$. We prove that $\sqrt{s^{+}(G)} + \sqrt{s^+(\overline{G})} < \sqrt{2}n$ and conjecture that $\sqrt{s^{+}(G)} + \sqrt{s^+(\overline{G})} \le 4n/3 - 1.$ We have used AutoGraphiX and Wolfram Mathematica to search for a counter-example. We also consider Nordhaus-Gaddum bounds for $s^+$ and bounds for the Randi\'c index.

\end{abstract}

\section{Introduction}

Let $G$ be  a graph with $n$ vertices, $m$ edges and chromatic number $\chi(G)$. Let $\overline{G}$ denote the complement of $G$ and $A$ denote the adjacency matrix of $G$. Let $\mu(G) = \mu_1(G) \ge ... \ge \mu_n(G)$ denote the eigenvalues of $A$. The inertia of $A$ is the ordered triple $(\pi, \nu, \gamma)$ where $\pi$, $\nu$ and $\gamma$ are the numbers (counting multiplicities) of positive, negative and zero eigenvalues of $A$ respectively. Let

\[
s^+ = \sum_{i=1}^\pi \mu_i^2 \qquad \mbox{  and  } \qquad s^- = \sum_{i=n-\nu+1}^n \mu_i^2.
\]

Note that 
$$
\sum_{i=1}^n \mu_i^2 = s^+ + s^- = tr(A^2) = 2m,
$$
where $tr(A^2)$ denotes the trace of the matrix $A^2$.

\medskip
Wocjan and Elphick \cite{wocjan13} introduced the spectral parameters $s^+$ and $s^-$ and conjectured that $1 + s^+/s^- \le \chi(G)$. They also demonstrated that this bound is often better than the classic inequality due to Hoffman: $1 + \mu/|\mu_n| \le \chi(G)$. Ando and Lin \cite{ando15} then used linear algebra to prove that:

\begin{equation}\label{eq:ando}
1 + \max{\left(\frac{s^+}{s^-} , \frac{s^-}{s^+}\right)} \le \chi(G).
\end{equation}

Elphick \emph{et al} \cite{elphick15} conjectured that for connected graphs:

\begin{equation}\label{eq:clive}
\min{(s^+ , s^-)} \ge n - 1 \mbox{  or equivalently  } \max{(s^+ , s^-)} \le 2m - n + 1,
\end{equation}
and proved this result for almost all graphs, including regular and bipartite graphs. They also proved that $s^- \le n^2/4$. 

\medskip

Stanley \cite{stanley87} proved the following upper bound on the spectral radius of a graph:

\[
\mu_1 \le \frac{\sqrt{8m + 1} - 1}{2}.
\]

Wu and Elphick \cite{wu15} strengthened this bound by proving that:

\[
\sqrt{s^+} \le \frac{\sqrt{8m + 1} - 1}{2}.
\]

AutoGraphiX (AGX) is a software program \cite{vns14,vns1,vns5} devoted to conjecture--making in graph theory. Conjectures 3 and 5 in this paper were not generated by AGX but have been checked for counter-examples by AGX.

\section{Nordhaus-Gaddum bounds}

In 1956, Nordhaus and Gaddum \cite{norgad} proved that
$$ 
2\sqrt{n} \le \chi(G) + \chi(\bar{G}) \le n + 1 
\qquad \mbox{ and } \qquad n \le \chi(G) \cdot \chi(\bar{G}) \le \frac{(n + 1)^{2}}{4}.
$$
Finck \cite{finck} showed that these bounds are sharp (taking floors and ceilings if necessary) and characterized extremal graphs. Similar bounds, now known as Nordhaus--Gaddum type inequalities, were obtained for a large number of graph invariants by a variety of authors. For more details about Nordhaus--Gaddum type inequalities, see the survey \cite{norgadsurvey} as well as the references therein. 

\subsection{Bounds for $\sqrt{s^+}$}

Amin and Hakimi \cite{amin1972} and Nosal \cite{nosal70} independently proved that for every graph $G$:

\[
n - 1 \le \mu(G) + \mu(\overline{G}) < \sqrt{2}(n - 1).
\]

The lower bound is exact for regular graphs. 

Adopting an approach first used by Nikiforov \cite{nikiforov02}, the above result can be extended as follows.

\begin{thm}
For any graph $G$ of order $n$,

\[
n - 1 \le \sqrt{s^{+}(G)} + \sqrt{s^+(\overline{G})} < \sqrt{2}n.
\]

\end{thm}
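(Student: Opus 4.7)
The plan is to dispatch the two inequalities by independent short arguments, each invoking exactly one result already recalled in the paper.

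For the lower bound, the only observation needed is the trivial one that $s^{+}(G) \ge \mu_{1}(G)^{2}$, since $s^{+}$ is a sum that includes the square of the largest (and, if $G$ has any edge, positive) eigenvalue; the edgeless case gives equality to $0$. Taking square roots and doing the same for $\overline{G}$ yields
\[
\sqrt{s^{+}(G)} + \sqrt{s^{+}(\overline{G})} \ \ge\ \mu(G) + \mu(\overline{G}),
\]
so the Amin--Hakimi/Nosal lower bound $\mu(G) + \mu(\overline{G}) \ge n-1$ quoted immediately above the theorem closes this half at once.

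For the upper bound, I would first pass from $s^{+}$ to the edge count and then apply Cauchy--Schwarz. From $s^{+} + s^{-} = 2m$ and $s^{-} \ge 0$ we have $s^{+}(G) \le 2m(G)$ and $s^{+}(\overline{G}) \le 2m(\overline{G})$, hence
\[
s^{+}(G) + s^{+}(\overline{G}) \ \le\ 2m(G) + 2m(\overline{G}) \ =\ 2\binom{n}{2} \ =\ n(n-1).
\]
Then the inequality $\sqrt{a} + \sqrt{b} \le \sqrt{2(a+b)}$ (Cauchy--Schwarz applied to $(1,1)$ and $(\sqrt{a},\sqrt{b})$) gives
\[
\sqrt{s^{+}(G)} + \sqrt{s^{+}(\overline{G})} \ \le\ \sqrt{2\bigl(s^{+}(G)+s^{+}(\overline{G})\bigr)} \ \le\ \sqrt{2n(n-1)} \ <\ \sqrt{2}\,n,
\]
the last inequality being strict because $n-1 < n$.

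The ``hard part'' is really just to notice that there is no hard part: the proof is entirely a chain of one-line estimates, following Nikiforov's template of replacing spectral radii with their Frobenius-type surrogates and then using a convexity inequality. The looseness of the argument (a factor $\sqrt{2}$ lost in Cauchy--Schwarz, and a further loss in bounding $s^{+}$ by $2m$, which is tight only for bipartite graphs) shows that $\sqrt{2}\,n$ is far from best possible, which is precisely what motivates the sharper conjectured bound $4n/3 - 1$ discussed in the abstract. I do not see an easy way to substitute the Wu--Elphick refinement $\sqrt{s^{+}}\le(\sqrt{8m+1}-1)/2$ into this scheme to improve $\sqrt{2}\,n$, since the same Cauchy--Schwarz step would reintroduce the $\sqrt{2}$ factor.
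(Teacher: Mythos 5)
Your proof is correct. The lower bound argument matches what the paper intends (the paper dismisses it as ``obvious''): $\sqrt{s^{+}} \ge \mu$ for each of $G$ and $\overline{G}$, then the Amin--Hakimi/Nosal bound. For the upper bound you take a genuinely simpler route. The paper invokes the Ando--Lin inequality (\ref{eq:ando}) to get $s^{+}(G) \le 2m\,(\chi(G)-1)/\chi(G)$ and only then applies Cauchy--Schwarz, arriving at the intermediate refinement
\[
\sqrt{s^{+}(G)} + \sqrt{s^{+}(\overline{G})} \le \sqrt{\left(2 - \frac{1}{\chi(G)} - \frac{1}{\chi(\overline{G})}\right) n(n-1)},
\]
from which $<\sqrt{2}\,n$ follows since the chromatic-number term is strictly less than $2$. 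You instead use only the trivial estimate $s^{+} \le 2m$ (from $s^{+}+s^{-}=2m$ and $s^{-}\ge 0$) together with the same Cauchy--Schwarz step, getting $\sqrt{2n(n-1)} < \sqrt{2}\,n$ directly. Your version is more elementary --- it needs no chromatic-number machinery and no citation of Ando--Lin --- and it fully suffices for the stated theorem; what it gives up is the sharper chromatic-number-dependent bound that the paper records along the way, which could matter for graphs where $\chi(G)$ or $\chi(\overline{G})$ is small. Your handling of the edgeless case in the lower bound and of the strictness of the final inequality is careful and correct.
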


\begin{proof}

The lower bound is obvious and exact for complete graphs. 

From (\ref{eq:ando}) we have that $s^+(G) \le 2m(\chi(G) - 1)/\chi(G).$ Therefore:

\[
\sqrt{s^{+}(G)} + \sqrt{s^+(\overline{G})} \le \sqrt{2m\frac{(\chi(G) - 1)}{\chi(G)}} + \sqrt{\left(n(n - 1) - 2m\right)\frac{(\chi(\overline{G}) - 1)}{\chi(\overline{G})}}.
\]

So using Cauchy-Schwarz:

\begin{equation}\label{eq:vlado}
\sqrt{s^{+}(G)} + \sqrt{s^+(\overline{G})} \le \sqrt{\left(2 - \frac{1}{\chi(G)} - \frac{1}{\chi(\overline{G})}\right) n(n - 1)} < \sqrt{2}n.
\end{equation}

\end{proof}

Similarly, at least one of $G$ and $\overline{G}$ is connected. So if we assume $G$ is disconnected and that inequality (\ref{eq:clive}) is true then:

\[
\sqrt{s^{+}(G)} + \sqrt{s^+(\overline{G})} \le \sqrt{2m} + \sqrt{n(n - 1)-2m -n + 1}.
\]

So using Cauchy-Schwarz, we strengthen Nosal's bound, since:

\[
\sqrt{s^{+}(G)} + \sqrt{s^+(\overline{G})} \le \sqrt{2(n(n - 1) - (n - 1)} = \sqrt{2}(n - 1).
\]

Following Nikiforov \cite{nikiforov07}, let:
\[
f_1 = \max_{v(G)=n}{(\mu(G) + \mu(\overline{G}))}.
\]

Nikiforov \cite{nikiforov07} proved the following lower bound and conjectured the following upper bound: 

\begin{equation}\label{eq:vlado2}
\frac{4n}{3}  - 2 \le f_1 \le \frac{4n}{3} + O(1).
\end{equation}

The authors of \cite{aouchiche08,openprob} used AGX to devise the following conjecture. Before the statement of the conjecture, recall that a {\it complete split graph} $CS_{n,\alpha}$ on $n$ vertices with independence number $\alpha$ is the complement of the graph composed of a clique $K_\alpha$ and $n-\alpha$ isolated vertices.

\begin{con}

For any simple graph $G$

\[
f_1 = \frac{4n}{3} - \frac{5}{3} + f(n)
\]

where $f(n) = 0$ if $n \equiv 2$ (mod 3); 

$f(n) = \left(\sqrt{(3n - 2)^2 +8} - (3n - 2)\right)/6$ if $n \equiv 1$ (mod 3); and

$f(n) = \left(\sqrt{(3n - 1)^2 + 8} - (3n  - 1)\right)/6$ if $n\equiv 0$ (mod 3).

This bound is exact if and only if $G$ or $\overline{G}$ is a complete split graph with an independent set on $\lfloor \frac{n}{3} \rfloor$ vertices (and also on $\lceil \frac{n}{3} \rceil$ vertices if $n \equiv 2$ (mod 3)).

\end{con}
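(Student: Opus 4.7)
My plan has three main steps: compute $\mu(G) + \mu(\overline G)$ exactly on the candidate extremal graphs, optimise over the split parameter, and then prove a matching upper bound on $f_1$.

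For the lower bound I would exhibit the construction. When $G = CS_{n,\alpha}$, the partition into the independent set of size $\alpha$ and the clique of size $n - \alpha$ is equitable, so $\mu(CS_{n,\alpha})$ equals the Perron root of $\begin{pmatrix}0 & n-\alpha\\ \alpha & n-\alpha-1\end{pmatrix}$, namely
\[
\mu(CS_{n,\alpha}) = \frac{n-\alpha-1 + \sqrt{(n-\alpha-1)^2 + 4\alpha(n-\alpha)}}{2}.
\]
Since $\overline{CS_{n,\alpha}} = K_\alpha \cup (n-\alpha)K_1$ has spectral radius $\alpha-1$, the sum $\mu(G) + \mu(\overline G)$ is an explicit function $F(n,\alpha)$. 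Treating $\alpha$ as continuous and differentiating locates a unique interior maximum, and unimodality restricts the integer maximiser to one of the two nearest integers; a case split by $n \bmod 3$ then simplifies the resulting radical to the conjectured form $\tfrac{4n}{3} - \tfrac{5}{3} + f(n)$ term by term. This step is a routine if tedious optimisation.

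The difficulty lies in the matching upper bound, which must sharpen Nikiforov's $f_1 \le \tfrac{4n}{3} + O(1)$ from (\ref{eq:vlado2}) to the exact constant. I would refine Nikiforov's argument by tracking the slack in each of its inequalities, typically a Cauchy--Schwarz applied to the Perron eigenvectors $x$ of $G$ and $y$ of $\overline G$ combined with the identity $A(G) + A(\overline G) = J - I$. Equality in the refined bound should force $x$ and $y$ to be essentially two-valued, with the ratio of the two values and the relative block sizes determined up to integrality; the integer constraint would then select $\lfloor n/3 \rfloor$ against $\lceil n/3 \rceil$ and produce the correction $f(n)$. A complementary local check---that any single edge addition or deletion strictly decreases $\mu(G) + \mu(\overline G)$ away from the extremum---would identify the extremal graphs as complete split graphs.

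I expect this stability analysis to be the main obstacle: the refinement must be tight enough to resolve the three residue classes modulo $3$ and to yield the nontrivial $f(n)$, whereas standard asymptotic spectral tools typically lose more than $O(1)$ in their lower-order terms. Small values of $n$ may well have to be verified directly, and a genuinely new ingredient---perhaps involving the characteristic polynomial of the extremal candidate and its response to edge perturbations---may be required. The extensive AGX search strongly supports the bound, but as the authors indicate, closing the $O(1)$ gap in Nikiforov's inequality has resisted purely asymptotic methods.
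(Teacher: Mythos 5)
This statement is a \emph{conjecture}: the paper offers no proof of it, only its provenance (it was generated by AutoGraphiX in \cite{aouchiche08,openprob}) and the remark that Terpai's theorem $4n/3-2\le f_1\le 4n/3-1$ brackets, but does not establish, the conjectured exact value. So there is no ``paper's proof'' to compare against, and your proposal should be judged on whether it actually closes the problem. It does not. The first half of your plan --- computing $\mu(CS_{n,\alpha})$ from the quotient matrix of the equitable partition, adding $\mu(\overline{CS_{n,\alpha}})=\alpha-1$, and optimising over $\alpha$ --- is sound and does verify that the conjectured quantity is a \emph{lower} bound for $f_1$. One caution there: the optimum is attained when the \emph{clique} side of the split has roughly $n/3$ vertices, i.e.\ independence number roughly $2n/3$ (for instance, for $n=5$ the value $5=4n/3-5/3$ is attained by $CS_{5,3}$ and $CS_{5,4}$, while $CS_{5,2}$ gives only $1+\sqrt7+1\approx 4.65$), so when you ``simplify the radical term by term'' make sure your maximiser is consistent with the description of the extremal graphs; the phrasing ``independent set on $\lfloor n/3\rfloor$ vertices'' in the statement should be read with care.

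The genuine gap is the matching upper bound, which is the entire content of the conjecture. Your proposal for it is a programme, not an argument: ``refine Nikiforov's proof by tracking the slack'' presupposes that the slack in his Cauchy--Schwarz step can be controlled to within the exact additive constant $-5/3+f(n)$, and there is no evidence for this --- Nikiforov's method loses an unspecified $O(1)$, and the strongest known refinements (Csikv\'ari's Kelmans transformation and Terpai's graphon version of it) only reach $4n/3-1$, still strictly above the conjectured value for every $n$. Nothing in your sketch explains how the three residue classes modulo $3$, the correction term $f(n)$, or the ``if and only if'' characterization of equality would emerge from such a refinement. You acknowledge this yourself (``a genuinely new ingredient may be required''), and that acknowledgement is accurate: the statement remains open, and the proposal does not constitute a proof of it.
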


Terpai \cite{terpai11} used mathematical analysis and graphons to very nearly solve this conjecture by proving that:

\[
\frac{4n}{3} - 2 \le f_1 \le \frac{4n}{3} -1.
\]

The proof of the lower bound in (\ref{eq:vlado2}) uses $G = K_r + \overline{K_{n-r}}$, and it is notable that these graphs and their complements, which are complete split graphs,  both have only one positive eigenvalue. This suggests the following conjecture.

\begin{con}\label{sqrts+conj}
For any graph $G$:
\[
\max_{v(G)=n}{(\mu(G) + \mu(\overline{G}))} = \max_{v(G)=n}{\left(\sqrt{s^{+}(G)} + \sqrt{s^+(\overline{G})}\right)} = \frac{4n}{3} - \frac{5}{3} + f(n).
\]
\end{con}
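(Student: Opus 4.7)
The plan rests on the elementary inequality $\mu(G)^2 \le s^{+}(G)$, with equality precisely when $\pi(G)=1$. Summing over $G$ and $\overline{G}$ yields
\[
\mu(G)+\mu(\overline{G}) \le \sqrt{s^{+}(G)}+\sqrt{s^{+}(\overline{G})},
\]
and hence $\max\bigl(\mu(G)+\mu(\overline{G})\bigr) \le \max\bigl(\sqrt{s^{+}(G)}+\sqrt{s^{+}(\overline{G})}\bigr)$. For the reverse direction I would exploit the fact that complete split graphs achieve equality pointwise: $CS_{n,\alpha}$ is the complete multipartite graph $K_{\alpha,1,\ldots,1}$ (with $n-\alpha$ singleton classes) and so has $\pi=1$; its complement $K_\alpha \cup (n-\alpha)K_1$ also has $\pi=1$, since only the clique component contributes a positive eigenvalue. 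Hence $\sqrt{s^{+}(CS_{n,\alpha})}+\sqrt{s^{+}(\overline{CS_{n,\alpha}})}=\mu(CS_{n,\alpha})+\mu(\overline{CS_{n,\alpha}})$, and (conditional on Conjecture 2) the value $\tfrac{4n}{3}-\tfrac{5}{3}+f(n)$ is attained by the $s^{+}$ functional at the same extremal graph.

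It therefore suffices to prove the matching uniform upper bound $\sqrt{s^{+}(G)}+\sqrt{s^{+}(\overline{G})}\le \tfrac{4n}{3}-\tfrac{5}{3}+f(n)$ for every graph $G$. The first route I would try is to strengthen Terpai's graphon argument by replacing the Perron eigenvector with the spectral projector onto the positive part of the adjacency operator and tracking the squared Schatten-$2$ norm of the resulting piece. A second, more structural route is to show that any maximizer admits a modification to a graph $H$ with $\pi(H)=\pi(\overline{H})=1$ without decreasing the functional; a short case analysis shows the only such graphs are, up to isolated vertices, complete split graphs and their complements, so the problem would reduce to the one-parameter optimisation over $\alpha$ that Conjecture 2 already addresses. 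A third, purely analytic fallback is to combine Terpai's $\mu(G)+\mu(\overline{G})\le 4n/3-1$ with the Wu-Elphick bound $\sqrt{s^{+}}\le(\sqrt{8m+1}-1)/2$ in a case split on $m$ to interpolate between the regimes $s^{+}\approx\mu^2$ and $s^{+}\gg\mu^2$.

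The hard part is this upper bound, which is essentially Conjecture 3 of the abstract. The difficulty is that $s^{+}$ has no single eigenvector realiser to dualise against, so Terpai's argument does not transfer mechanically: a graphon proof must carry along the entire positive eigenspace of the limiting spectrum. The structural collapse likewise requires producing a concrete modification that strictly decreases $\pi(G)$ without decreasing $\sqrt{s^{+}(G)}+\sqrt{s^{+}(\overline{G})}$, and this coupling between $G$ and $\overline{G}$ --- flattening the positive spectrum of $G$ generically perturbs that of $\overline{G}$ in an unpredictable direction --- is genuinely delicate.
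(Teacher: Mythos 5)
This statement is a conjecture in the paper, and the paper offers no proof of it --- only the motivating observation that the conjectured extremal graphs (complete split graphs and their complements) each have exactly one positive eigenvalue, so that $\sqrt{s^{+}}+\sqrt{s^{+}(\overline{\,\cdot\,})}$ coincides with $\mu+\mu(\overline{\,\cdot\,})$ on them, together with computational checks in AGX and Mathematica. Your ``easy half'' is correct and is exactly that observation: $\mu(G)^2\le s^{+}(G)$ with equality iff $\pi(G)=1$ gives $\max(\mu+\overline{\mu})\le\max(\sqrt{s^{+}}+\sqrt{\overline{s^{+}}})$, and the complete split graphs show the two maxima agree if the $s^{+}$-maximum is attained at a graph with $\pi(G)=\pi(\overline{G})=1$. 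But that is precisely where the proposal stops being a proof.

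The genuine gap is the upper bound $\sqrt{s^{+}(G)}+\sqrt{s^{+}(\overline{G})}\le \frac{4n}{3}-\frac{5}{3}+f(n)$, which is the entire content of the conjecture; none of your three routes establishes it, and the third demonstrably cannot as stated. Summing the Wu--Elphick bound $\sqrt{s^{+}}\le(\sqrt{8m+1}-1)/2$ over $G$ and $\overline{G}$ and maximizing over $m+\overline{m}=n(n-1)/2$ gives roughly $\sqrt{2}\,n$, which is exactly the bound the paper already proves in its Theorem~1 and is strictly weaker than $4n/3+O(1)$; no case split on $m$ repairs this, because the worst case $m\approx n(n-1)/4$ is also where Terpai's bound gives no extra leverage on $s^{+}$ (a graph can have $\mu$ small relative to $\sqrt{s^{+}}$). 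The graphon route and the structural-collapse route are research programmes, not arguments: the key step of modifying a maximizer to one with $\pi=\overline{\pi}=1$ without decreasing the functional is asserted, not performed, and you yourself flag that perturbing $G$ moves the spectrum of $\overline{G}$ uncontrollably. Note also that even the first quantity $\max(\mu(G)+\mu(\overline{G}))$ is not known to equal $\frac{4n}{3}-\frac{5}{3}+f(n)$; Terpai only sandwiches it between $\frac{4n}{3}-2$ and $\frac{4n}{3}-1$, so your ``attainment'' step is conditional on the paper's Conjecture~2 as well. In short, the proposal correctly reduces the statement to its hard half and then leaves that half open, which is the status the paper itself assigns to it.
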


We have tested this conjecture using AGX and the 10,000s of named graphs with up to 40 vertices in the Wolfram Mathematica database, and found no counter-example.

\bigskip

\subsection{Bounds for $s^+$}

Similarly we can consider upper and lower bounds for $s^+(G) + s^+(\overline{G})$. 

\begin{thm}

For any graph $G$ (provided inequality (\ref{eq:clive}) is true):

\[
\frac{(n - 1)^2}{2} < s^+(G) + s^+(\overline{G}) \le (n - 1)^2.
\]

\end{thm}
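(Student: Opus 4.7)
The plan is to handle the two bounds separately. For the upper bound, I would use the standard fact that at least one of $G$ and $\overline{G}$ is connected; without loss of generality take this to be $G$. Then (\ref{eq:clive}) yields $s^+(G) \le 2m - n + 1$, while $s^+(\overline{G}) + s^-(\overline{G}) = 2m(\overline{G})$ together with $s^-(\overline{G}) \ge 0$ gives the trivial bound $s^+(\overline{G}) \le 2m(\overline{G})$. Adding these and using $m + m(\overline{G}) = \binom{n}{2}$ produces
\[
s^+(G) + s^+(\overline{G}) \le n(n-1) - (n-1) = (n-1)^2,
\]
with equality attained by $G = K_n$.

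For the lower bound, I would square Theorem~1, $\sqrt{s^+(G)} + \sqrt{s^+(\overline{G})} \ge n-1$, to obtain
\[
(n-1)^2 \le s^+(G) + s^+(\overline{G}) + 2\sqrt{s^+(G)\,s^+(\overline{G})},
\]
and then bound the cross term using AM--GM, $2\sqrt{ab}\le a+b$, giving $(n-1)^2 \le 2\bigl(s^+(G) + s^+(\overline{G})\bigr)$, i.e., the non-strict version $(n-1)^2/2 \le s^+(G)+s^+(\overline{G})$.

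The hardest step is upgrading this to the strict inequality. Equality in AM--GM forces $s^+(G) = s^+(\overline{G})$, while equality in Theorem~1, via the intermediate chain $\mu_1(G) + \mu_1(\overline{G}) \le \sqrt{s^+(G)} + \sqrt{s^+(\overline{G})}$ combined with the Amin--Hakimi/Nosal bound $\mu_1(G) + \mu_1(\overline{G}) \ge n-1$, forces each of $G$ and $\overline{G}$ to have a single positive eigenvalue, hence to be complete multipartite. A complete multipartite graph whose complement is also complete multipartite must be $K_n$ or $\overline{K_n}$, and for these $s^+(G) \neq s^+(\overline{G})$. The two equalities therefore cannot coexist, which rules out $(n-1)^2/2 = s^+(G)+s^+(\overline{G})$ and yields the strict inequality.
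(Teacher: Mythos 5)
Your upper bound is essentially the paper's own argument: the paper writes it as $s^-(G)+s^-(\overline{G})\ge n-1$ subtracted from $s^+(G)+s^-(G)+s^+(\overline{G})+s^-(\overline{G})=n(n-1)$, which is the same computation as your $s^+(G)\le 2m-n+1$ plus $s^+(\overline{G})\le 2m(\overline{G})$. Your non-strict lower bound, obtained by squaring Theorem~1 and applying AM--GM to the cross term, is a legitimate alternative to the paper's route (which bounds $s^+\ge\mu^2\ge 4m^2/n^2$ and minimises the resulting quadratic in $m$); both deliver $(n-1)^2/2$. The problem is in your strictness step.

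The gap is the inference ``single positive eigenvalue, hence complete multipartite.'' Smith's theorem characterises only \emph{connected} graphs with one positive eigenvalue, and your equality conditions give you no connectivity: they only force $s^+(G)=\mu_1(G)^2$ and $s^+(\overline{G})=\mu_1(\overline{G})^2$, i.e.\ each of $G$ and $\overline{G}$ has at most one positive eigenvalue, while at most one of the two graphs can be assumed connected. A disconnected graph with exactly one positive eigenvalue is a complete multipartite graph together with isolated vertices, and once such graphs are admitted your classification fails: for $G=K_s\cup\overline{K_r}$ (so that $\overline{G}$ is a complete split graph) both $G$ and $\overline{G}$ have exactly one positive eigenvalue, yet $G$ is neither $K_n$ nor $\overline{K_n}$. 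So the dichotomy you use to show that the two equality conditions cannot coexist does not cover all candidates, and the strict inequality is not yet established. The argument can be repaired either by verifying $s^+(G)\ne s^+(\overline{G})$ directly on the family of complete split graphs and their complements (true, but it is an additional computation), or, as the paper in effect does, by invoking the equality condition of the Nosal/Amin--Hakimi bound you already use: equality there forces $G$ to be regular, and combined with $\mu_1(G)=\mu_1(\overline{G})$ this makes $G$ a $\frac{n-1}{2}$-regular graph, whose components each have at least $\frac{n+1}{2}$ vertices, so $G$ and $\overline{G}$ are both connected and Smith's theorem then applies legitimately.
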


\begin{proof}~\\

\emph{Upper bound}

Since $s^+(G) + s^-(G) = 2m$:

\[
s^+(G) + s^-(G) + s^+(\overline{G}) + s^-(\overline{G}) = 2m + (n(n - 1) - 2m) = n(n - 1).
\]

It is well known that if $G$ is disconnected then $\overline{G}$ is connected. Therefore assuming (\ref{eq:clive}) is correct, then $s^-(G) + s^-(\overline{G}) \ge n - 1.$ Hence:

\[
s^+(G) + s^+(\overline{G}) = n(n - 1) - (s^-(G) + s^-(\overline{G})) \le n(n - 1) -(n - 1) = (n - 1)^2.
\]

This bound is exact when $G = K_n$ or the empty graph.

\medskip
\emph{Lower bound}

Using that $\mu(G) \ge 2m/n$ we have that:

\[
s^+(G) + s^+(\overline{G}) \ge \frac{4m^2}{n^2} + \frac{(n(n - 1) - 2m)^2}{n^2}.
\]

This sum is minimised when $m(G) = m(\overline{G}) = n(n-1)/4$, so

\[
s^+(G) + s^+(\overline{G}) \ge \frac{(n - 1)^2}{2}.
\]

For this lower bound to be exact would require a \emph{regular graph}  $G$ such that $G$ and $\overline{G}$ both have precisly one positive eigenvalue and $m(G) = m(\overline{G})$. Smith \cite{smith70} proved that complete multipartite graphs are the only connected graphs with only one positive eigenvalue. Consequently no such graph can exist, and $s^+(G) + s^+(\overline{G}) > (n -1)^2/2$. 

\end{proof}

The best we can state about the lower bound is the next conjecture, for which we need the following definitions. A {\it strongly regular graph} (SRG),  with parameters $(n, k, \lambda, \nu)$ is a $k$-regular graph on $n$ vertices in which every two adjacent vertices have $\lambda$ common neighbours and every two non-adjacent vertices have $\nu$ common neighbours. A {\it conference graph} is a SRG with parameter set $(4t+1, 2t, t-1,t)$ where $t$ is a positive integer and $n = 4t + 1$ is a sum of squares of two integers.

\begin{con}
For any graph $G$,
\[
s^+(G) + s^+(\overline{G}) \ge \frac{(n - 1)^2}{2} + \frac{(n - 1)(n + 1 -2\sqrt{n})}{4} = \frac{(n - 1)(3n - 1 - 2\sqrt{n})}{4}.
\]
with equality if and only if $G$ is a conference graph.
\end{con}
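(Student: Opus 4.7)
The plan is to recast the conjecture as the equivalent upper bound
\[
s^-(G) + s^-(\overline{G}) \le \frac{(n-1)(\sqrt{n}+1)^2}{4},
\]
which follows from $s^+(G)+s^-(G)+s^+(\overline{G})+s^-(\overline{G}) = n(n-1)$. The first step is to verify that equality holds when $G$ is a conference graph: since $G$ and $\overline{G}$ share the same parameters $(4t+1,2t,t-1,t)$, both have smallest eigenvalue $(-1-\sqrt{n})/2$ with multiplicity $(n-1)/2$, and a direct computation gives $s^-(G)+s^-(\overline{G}) = (n-1)(\sqrt{n}+1)^2/4$, matching the conjectured value.

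Next I would extract the baseline $(n-1)^2/2$ from Nosal's inequality $\mu_1(G)+\mu_1(\overline{G}) \ge n-1$: the QM--AM inequality applied to $\mu_1(G)^2+\mu_1(\overline{G})^2 \le s^+(G)+s^+(\overline{G})$ immediately yields $s^+(G)+s^+(\overline{G}) \ge (n-1)^2/2$. The remaining task is to sharpen this by the correction term $(n-1)(\sqrt{n}-1)^2/4$, which must come from contributions of eigenvalues beyond $\mu_1$ and $\mu_1(\overline{G})$.

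The key tool is the matrix identity $A(G)+A(\overline{G}) = J - I$, together with Weyl's inequalities, which yield the companion interlacing
\[
\mu_i(G) + \mu_{n+2-i}(\overline{G}) \le -1 \quad \mbox{for } 2 \le i \le n,
\]
pairing the second-largest positive eigenvalue of one graph with the most negative eigenvalue of the other. Combined with the trace identity $tr(A(G)\cdot A(\overline{G})) = 0$ (since the off-diagonal supports of $A(G)$ and $A(\overline{G})$ are disjoint and both have zero diagonal), which implies $tr(A(G)^2)+tr(A(\overline{G})^2)=n(n-1)$, I would attempt to reduce the problem to an eigenvalue optimisation subject to these linear constraints and the fixed total second moment.

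The principal obstacle I anticipate is that these pairwise interlacing bounds are too lossy to produce the $\sqrt{n}$ correction: the extremal conference graph has $(n-1)/2$ repeated positive eigenvalues $(-1+\sqrt{n})/2$, and capturing the collective contribution of this bulk spectrum seems to require a majorisation argument of Horn type describing admissible joint spectra of $(B,C)$ with $B+C = J-I$ and $B,C$ restricted to adjacency matrices of graphs. An alternative I would pursue in parallel is an SDP-style relaxation minimising $s^+(B)+s^+(C)$ over symmetric $B,C$ with entries in $[0,1]$, zero diagonal, and $B+C=J-I$, and then verifying a posteriori that a minimiser corresponds to a conference graph spectrum; ruling out non-integral minimisers is likely the crux.
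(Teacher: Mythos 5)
This statement is a \emph{conjecture} in the paper: the authors do not prove it. What they offer is (i) a verification of the claimed equality case by computing the spectrum of a conference graph, which gives $s^+(G)=s^+(\overline{G})=\frac{(n-1)^2}{4}+\frac{1}{8}(n-1)(n+1-2\sqrt{n})$, (ii) computational evidence from AGX and Mathematica, and (iii) the remark that one might adapt Nikiforov and Yuan's proof of $E(G)+E(\overline{G})\le (n-1)(1+\sqrt{n})$, which uses Weyl's inequalities for Nordhaus--Gaddum sums of symmetric non-negative matrices with zero diagonal and entries at most $1$. Your proposal should therefore be judged against that backdrop, and the verdict is that it, too, is not a proof: you assemble correct preliminary material and then explicitly stop at the hard step.

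The parts you do establish are sound. The reformulation as $s^-(G)+s^-(\overline{G})\le \frac{(n-1)(\sqrt{n}+1)^2}{4}$ via $s^+(G)+s^-(G)+s^+(\overline{G})+s^-(\overline{G})=n(n-1)$ is a correct and clean equivalent; your conference-graph check agrees with the paper's; and your derivation of the baseline $(n-1)^2/2$ from Nosal's inequality plus QM--AM is valid (and arguably slicker than the paper's route via $\mu\ge 2m/n$, though the paper additionally uses Smith's classification to make that baseline strict). Your proposed machinery --- the Weyl pairings $\mu_i(G)+\mu_{n+2-i}(\overline{G})\le -1$ from $A(G)+A(\overline{G})=J-I$, and the trace identity --- is essentially the same toolkit the paper points to when it cites Nikiforov--Yuan. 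But the gap you yourself name is the whole problem: those pairwise bounds control individual eigenvalue sums, not the aggregate second moment of the positive part of the spectrum, and nothing in your sketch produces the $\sqrt{n}$-order correction term $\frac{(n-1)(n+1-2\sqrt{n})}{4}$ or rules out non-integral minimisers of your SDP relaxation. Since neither you nor the paper closes that step, the statement remains open; your write-up is best read as a reasonable research plan, consistent with the paper's own suggested line of attack, rather than a proof.
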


Regarding this conjecture, we can make the following observations. If $G = SRG(n, k, \lambda, \nu)$ then $\overline{G} = SRG(n, n-k-1, n-2-2k+\nu, n - 2k + \lambda).$  Consequently if $G$ is a conference graph then $\overline{G}$ is a conference graph with the same parameter set. The spectrum of a conference graph is:

\[
(n - 1)/2 , ((\sqrt{n} - 1)/2)^{(n-1)/2} , (-(\sqrt{n} + 1)/2)^{(n-1)/2}.
\]

Therefore:

\[
s^+(G) = s^+(\overline{G}) = \left(\frac{n - 1}{2}\right)^2 + \frac{1}{2}(n - 1)\left(\frac{\sqrt{n} - 1}{2}\right)^2 = \frac{(n - 1)^2}{4} + \frac{1}{8}(n - 1)(n + 1 - 2\sqrt{n}).
\]

We have tested this conjecture using AGX and named graphs in Wolfram Mathematica and found no counter-example. 

\bigskip
It is worth noting that if $E(G)$ denotes graph energy, that is $E(G) = \sum_{i=1}^n |\mu_i(G)|$, then Nikiforov and Yuan \cite{nikiforov13} proved that:
\[
E(G) + E(\overline{G}) \le (n - 1)(1 + \sqrt{n})
\]
with equality if and only if $G$ is a conference graph.

There are some similarities between this result and Conjecture 5, since both involve positive eigenvalues and the same extremal graphs. It may be possible  to adapt their proof, which uses Weyl's inequalities for Nordhaus-Gaddum sums of symmetric non-negative matrices with zero diagonal and with all entries $\le 1$, to prove Conjecture 5.

\section{Bounds involving Randi\'c index}

Let $R(G)$ denote the Randi\'c index of graph $G$, which has applications in mathematical chemistry \cite{randic1975}. It is defined as follows, where $E$ denotes the edge set of $G$:

\[
R(G) = \sum_{ij  \in E} \frac{1}{\sqrt{d_i.d_j}}.
\]

To prove the next theorem, we need the following lemma proved in \cite{favaron93}.
\begin{lem}[\cite{favaron93}]\label{fav-lem}
For any connected graph $G$,
$$
\frac{m}{\mu} \le R(G)
$$
with equality if and only if $G$ is regular or bipartite semiregular.
\end{lem}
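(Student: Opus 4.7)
The plan is to combine a Rayleigh-quotient estimate with a single Cauchy--Schwarz step. Specifically, I would first test the Rayleigh quotient of $A$ against the vector $x$ with $x_i = \sqrt{d_i}$. Since $x^Tx = \sum_i d_i = 2m$ and $x^TAx = 2\sum_{ij\in E}\sqrt{d_id_j}$, the inequality $x^TAx \le \mu\, x^Tx$ yields
\[
\sum_{ij\in E}\sqrt{d_id_j} \;\le\; m\mu.
\]

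Next I would apply Cauchy--Schwarz in the form
\[
m^2 \;=\; \Bigl(\sum_{ij\in E} (d_id_j)^{-1/4}\cdot (d_id_j)^{1/4}\Bigr)^{2} \;\le\; \Bigl(\sum_{ij\in E}\tfrac{1}{\sqrt{d_id_j}}\Bigr)\Bigl(\sum_{ij\in E}\sqrt{d_id_j}\Bigr) \;=\; R(G)\sum_{ij\in E}\sqrt{d_id_j}.
\]
Combining with the previous bound gives $m^2 \le R(G)\cdot m\mu$, whence $m/\mu \le R(G)$, which is the desired inequality.

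For the equality case, which I expect to be the main subtlety, I would argue as follows. Equality in the Cauchy--Schwarz step forces $\sqrt{d_id_j}$ to be a constant $c$ along every edge. Equality in the Rayleigh quotient, together with the Perron--Frobenius theorem and the connectedness of $G$, forces $x_i=\sqrt{d_i}$ to be (a positive multiple of) the Perron eigenvector of $A$, i.e.\ $\sum_{j\sim i}\sqrt{d_j}=\mu\sqrt{d_i}$ for every vertex $i$. Substituting $\sqrt{d_j}=c/\sqrt{d_i}$ for each neighbour $j$ of $i$ then gives $d_i\cdot c/\sqrt{d_i}=\mu\sqrt{d_i}$, so $\mu=c$. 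Since $d_id_j=\mu^2$ on every edge, each vertex is adjacent only to vertices of one specific degree; connectedness then forces either a single degree value (the regular case) or exactly two degree values with every edge joining one to the other (the bipartite semiregular case). Conversely, in both of these cases one checks directly that $\sqrt{d_id_j}$ is constant along edges and $\mu=\sqrt{d_id_j}$, so $R(G)=m/\mu$, completing the characterization.
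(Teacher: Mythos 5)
The paper does not prove this lemma at all: it is quoted verbatim from Favaron, Mah\'eo and Sacl\'e \cite{favaron93} and used as a black box, so there is no in-paper argument to compare against. Your proof is correct and complete: the Rayleigh-quotient test with $x_i=\sqrt{d_i}$ gives $\sum_{ij\in E}\sqrt{d_id_j}\le m\mu$, the Cauchy--Schwarz step gives $m^2\le R(G)\sum_{ij\in E}\sqrt{d_id_j}$, and the equality analysis correctly traces equality back through both steps (constancy of $d_id_j$ on edges from Cauchy--Schwarz, $\sqrt{d_i}$ being the Perron vector from Rayleigh, hence $d_id_j=\mu^2$ on every edge, which by connectedness forces regularity or bipartite semiregularity). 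This is essentially the standard argument from the cited reference, so nothing further is needed.
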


Favaron \emph{et al} \cite{favaron93} proved that:

\[
|\mu_n| \le R(G).
\]

This bound can be strengthened as follows.

\begin{thm}
For any connected graph $G$
\[
\sqrt{s^-} \le R(G)
\]
with equality if and only if $G$ is a complete bipartite graph.
\end{thm}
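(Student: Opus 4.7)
The plan is to reduce the claim to the Favaron--Herzog--Rautenbach lemma (Lemma \ref{fav-lem}). Since $m/\mu \le R(G)$, it suffices to show that $\sqrt{s^-} \le m/\mu$, or equivalently $\mu^2\, s^- \le m^2$. I would first observe the trivial bound $s^+ \ge \mu^2$ (since $\mu=\mu_1$ is itself a positive eigenvalue contributing $\mu^2$ to $s^+$), which combined with $s^+ + s^- = 2m$ yields $s^- \le 2m - \mu^2$. Multiplying by $\mu^2$, it then remains to prove the elementary inequality $\mu^2(2m-\mu^2) \le m^2$, which rearranges to $(m-\mu^2)^2 \ge 0$. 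Chaining these gives $\sqrt{s^-}\le m/\mu \le R(G)$.

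For the equality characterization I would track the two inequalities separately. Equality in $\sqrt{s^-}= m/\mu$ forces both $s^- = 2m - \mu^2$, i.e.\ $s^+ = \mu^2$, meaning $G$ has exactly one positive eigenvalue, and $(m-\mu^2)^2=0$, i.e.\ $\mu = \sqrt{m}$. Equality in Lemma \ref{fav-lem} requires $G$ to be regular or bipartite semiregular. By Smith's theorem (cited earlier in the paper), the only connected graphs with a single positive eigenvalue are the complete multipartite graphs. Among these, a quick case check shows that $\mu^2 = m$ is equivalent to $G$ being bipartite: for $K_{n_1,\dots,n_k}$ with $k\ge 3$, one has $m = \tfrac12\sum_{i<j}n_in_j$ while $\mu$ exceeds $\max_i(n-n_i)$, and already the balanced case $K_{r,r,r}$ gives $\mu^2 = 4r^2 \ne 3r^2 = m$. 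So $G$ must be $K_{a,b}$, and every such graph is automatically bipartite semiregular and satisfies $\mu^2 = ab = m$ and $s^- = ab$, hence $\sqrt{s^-} = \sqrt{ab} = R(K_{a,b})$.

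Conversely I would verify by direct computation that every complete bipartite $K_{a,b}$ attains equality, closing the characterization. The main obstacle I anticipate is not the analytic inequality, which collapses to a square, but the equality analysis: specifically, pinning down that $\mu^2 = m$ together with Smith's classification rules out all complete multipartite graphs other than complete bipartite ones. This step is where one needs to be careful that regular complete multipartite graphs with more than two parts are excluded by the $\mu^2 = m$ condition rather than by the lemma's equality clause.
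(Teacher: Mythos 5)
Your proposal is correct and follows essentially the paper's route: both arguments reduce the inequality to $\mu\sqrt{s^-}\le m$ (your completion of the square $\mu^2(2m-\mu^2)=m^2-(m-\mu^2)^2$ and the paper's AM--GM/Cauchy--Schwarz chain $2\sqrt{\mu\sqrt{s^-}}\le \mu+\sqrt{s^-}\le\sqrt{2(\mu^2+s^-)}\le 2\sqrt{m}$ extract the same content from $\mu^2\le s^+$ and $s^++s^-=2m$), then invoke Lemma~\ref{fav-lem}, and settle equality via $s^+=\mu^2=m$, Smith's theorem, and the regular/bipartite-semiregular dichotomy. The only blemish is the inessential aside that $\mu$ ``exceeds $\max_i(n-n_i)$'' (that quantity is the maximum degree, which bounds $\mu$ from \emph{above}) together with a stray factor $\tfrac12$ in $m=\sum_{i<j}n_in_j$; neither is needed, since, as you note at the end, the regular multipartite case is eliminated by $\mu^2=m$ forcing $(k-1)^2=k(k-1)/2$, i.e.\ $k=2$, and the bipartite semiregular case is immediate.
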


\begin{proof}

Using successively the AM-GM inequality, the Cauchy-Schwarz inequality and that $\sum \mu_i^2 = 2m$, we have that:

\begin{equation}
2\sqrt{\mu\sqrt{s^-}} \le \mu + \sqrt{s^-} \le \sqrt{2(\mu^2 + s^-)} \le 2\sqrt{m}. \label{ineq}	
\end{equation}

Consequently $\sqrt{s^-} \le m/\mu$,  and from Lemma~\ref{fav-lem} $m/\mu \le R(G)$, which completes the proof.

For the characterization of the extremal graphs, it is easy to see that $\sqrt{s^-} = R(G)$ for any complete bipartite graph.

Now assume that $G$ is such that $\sqrt{s^-} = R(G)$. From $\sqrt{s^-} \le m/\mu \le R(G)$  necessarily $m/\mu = R(G)$ which implies equality in (\ref{ineq}), {\it i.e.}, $\mu \sqrt{s^-} = m$, and then $\mu = \sqrt{s^-} = \sqrt{m}$. Thus, $s^- = m$ and therefore $\mu^2 = m = s^+$, which means that $G$ contains exactly one positive eigenvalue, {\it i.e.} $G$ is necessarily a complete multipartite graph. In addition, according to Lemma~\ref{fav-lem}, $G$ is $\mu$-regular or bipartite semiregular with degrees $d$ and $d'$ such that $\mu = d\cdot d'$. So in the case of a bipartite semiregular graph, it is complete. 

If $G$ is regular, then $m = \mu^2 = 4m^2/n^2$ and then $G$ is a complete multipartite $\frac{n}{2}$-regular graph, which is necessarily $K_{\frac{n}{2},\frac{n}{2}}$. 
\end{proof}

Experiments with AGX suggest the following conjectures.

\begin{con}
For any connected graph $G$
\[
\frac{2\sqrt{n-1}}{n-3 + 2\sqrt{2}} \le \frac{\sqrt{s^+}}{R(G)} \le \frac{2(n-1)}{n}.
\]
The lower (resp. upper) bound is reached if and only if $G$ is the path $P_n$ (resp. the complete graph $K_n$).
\end{con}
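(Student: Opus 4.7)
The plan is to verify the extremal cases and then handle the two bounds separately, as they require quite different arguments.

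\textbf{Extremal values.} For $K_n$, the spectrum $(n-1,-1,\ldots,-1)$ gives $s^+=(n-1)^2$ and $R(K_n)=\binom{n}{2}/(n-1)=n/2$, so the ratio equals $2(n-1)/n$. For $P_n$, which is bipartite, $s^+=m=n-1$; summing $1/\sqrt{1\cdot 2}$ over the two pendant edges and $1/\sqrt{2\cdot 2}$ over the remaining $n-3$ edges gives $R(P_n)=(n-3+2\sqrt{2})/2$, so the ratio equals $2\sqrt{n-1}/(n-3+2\sqrt{2})$.

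\textbf{Upper bound.} The plan is to chain three inequalities already available in the paper. From Ando--Lin~\eqref{eq:ando} together with $\chi(G)\le n$ one obtains $s^+\le 2m(n-1)/n$. Since $\mu^2\le s^+$, we have $\mu\sqrt{s^+}\le s^+ \le 2m(n-1)/n$, so $\sqrt{s^+}\le 2m(n-1)/(n\mu)$. Finally, Lemma~\ref{fav-lem} supplies $m/\mu\le R(G)$, whence
\[
\sqrt{s^+}\;\le\;\frac{2(n-1)}{n}\cdot\frac{m}{\mu}\;\le\;\frac{2(n-1)}{n}\,R(G).
\]
Equality requires $\chi(G)=n$, which forces $G=K_n$, and $K_n$ indeed saturates all three intermediate steps (single positive eigenvalue, tight Ando--Lin, and regularity for Favaron).

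\textbf{Lower bound.} This is the delicate part. Restricted to trees, $s^+(T)=m=n-1$ is constant, so the inequality reduces to the classical statement that $R(T)\le R(P_n)$ with equality only at $P_n$. The main difficulty is extending this to arbitrary connected graphs: adding edges to a tree increases both $\sqrt{s^+}$ and (typically) $R$, and a quantitative comparison tight at $P_n$ is required. A natural route is to analyse elementary edge operations and reduce to a spanning tree; alternatively, one might try to establish a direct algebraic inequality of the form $(n-3+2\sqrt{2})^2 s^+\ge 4(n-1)R(G)^2$. However, a Cauchy--Schwarz-type estimate on $R(G)^2$ tends to be tight at regular graphs rather than at the path, so additional degree-sequence input appears necessary to push tightness to $P_n$.

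\textbf{Main obstacle.} The lower bound. The extremal graph $P_n$ is neither spectrally nor degree-sequentially generic, and none of the standard tools in the paper (Ando--Lin, Favaron, Cauchy--Schwarz) is tight at $P_n$; a new estimate tailored to sparse, path-like structure appears necessary, which is presumably why the statement is left as a conjecture.
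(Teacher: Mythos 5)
Your verification of the extremal values is correct, and your upper-bound argument is valid but takes a genuinely different --- and in one respect stronger --- route than the paper's. The paper chains
$R(G)\ge m/\mu \ge m/\sqrt{s^+} = m\sqrt{s^+}/s^+ \ge m\sqrt{s^+}/(2m-n+1)\ge n\sqrt{s^+}/(2(n-1))$,
so its middle step needs the \emph{conjectured} inequality (\ref{eq:clive}), namely $s^+\le 2m-n+1$, and the paper's proof of the upper bound is therefore explicitly conditional. You instead bound $s^+\le 2m(\chi(G)-1)/\chi(G)\le 2m(n-1)/n$ using the \emph{proven} Ando--Lin inequality (\ref{eq:ando}) together with $\chi(G)\le n$, and then combine $\sqrt{s^+}\le s^+/\mu$ with Lemma~\ref{fav-lem} just as the paper does. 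The skeleton is the same (Favaron's lemma plus $\mu^2\le s^+$ plus an upper bound on $s^+$ in terms of $m$ and $n$), but swapping the conjectured bound for Ando--Lin makes your upper bound, and its equality case forced by $\chi(G)=n$, unconditional --- an improvement on what the paper records. On the lower bound you are aligned with the paper: it is left entirely open there, supported only by AutoGraphiX experiments, and your diagnosis that none of the available tools is tight at $P_n$ is a fair account of the obstruction. One minor caveat: your formula for $R(P_n)$ presumes $n\ge 3$.
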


We can prove this upper bound, using Lemma 1 and provided inequality (2) is correct, as follows.

\[
R(G) \ge \frac{m}{\mu} \ge \frac{m}{\sqrt{s^+}} = \frac{m\sqrt{s^+}}{s^+} \ge \frac{m\sqrt{s^+}}{2m - n + 1} \ge \frac{n\sqrt{s^+}}{2(n - 1)}.
\]

Clearly $2m = n(n - 1)$ only for $K_n$.

\begin{con}
For any triangle-free graph $G$
\[
R(G) \ge \sqrt{s^+} 
\]
with equality if and only if $G$ is a complete bipartite graph.
\end{con}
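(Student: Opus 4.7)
The plan is to mimic the proof of Theorem 3 with $s^+$ in place of $s^-$. That proof rested on the chain $2\sqrt{\mu\sqrt{s^-}} \le \mu+\sqrt{s^-} \le \sqrt{2(\mu^2+s^-)} \le 2\sqrt{m}$, whose final step used the universally valid $\mu^2 \le s^+$. Running the symmetric chain
\[
2\sqrt{\mu\sqrt{s^+}} \;\le\; \mu+\sqrt{s^+} \;\le\; \sqrt{2(\mu^2+s^+)} \;\le\; 2\sqrt{m}
\]
would yield $\mu\sqrt{s^+} \le m$, whence $\sqrt{s^+} \le m/\mu$, and Lemma~\ref{fav-lem} would finish the job. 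However, the final inequality now demands $\mu^2 \le s^-$, which fails in general and must be supplied by the triangle-free hypothesis.

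The heart of the argument is therefore the auxiliary fact that $\mu^2 \le s^-$ for every triangle-free graph. To prove it, I would use $tr(A^3) = 6 t(G) = 0$, split the trace over positive and negative eigenvalues, and estimate
\[
\mu_1^3 \;\le\; \sum_{\mu_i>0}\mu_i^3 \;=\; \sum_{\mu_i<0}|\mu_i|^3 \;\le\; |\mu_n|\, s^- \;\le\; \mu_1\, s^-,
\]
giving $\mu_1^2 \le s^-$. The first bound is trivial (all summands positive), the equality is the vanishing triangle count, and the last step uses $|\mu_n| \le \mu_1$. This is the main obstacle in the sense that it is the only place where triangle-freeness genuinely enters; everything else is a direct parallel of Theorem 3.

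For the equality characterization, I would argue that $\sqrt{s^+} = R(G)$ forces equality throughout the chain. Equality in AM-GM forces $\mu = \sqrt{s^+}$, so $G$ has a unique positive eigenvalue and is therefore complete multipartite by Smith's theorem; triangle-freeness then collapses this to $G = K_{a,b}$. Equality in Lemma~\ref{fav-lem} requires $G$ to be regular or bipartite semiregular, which every $K_{a,b}$ automatically is, and a direct check gives $\sqrt{s^+(K_{a,b})} = \sqrt{ab} = R(K_{a,b})$, closing the equivalence. Finally, disconnected triangle-free graphs reduce to the connected case by applying the bound componentwise and using subadditivity $\sqrt{\sum_i s^+(G_i)} \le \sum_i \sqrt{s^+(G_i)}$, so no generality is lost by assuming $G$ connected when invoking Lemma~\ref{fav-lem}.
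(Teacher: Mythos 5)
Your proposal is correct, and it proves strictly more than the paper does: the paper offers no proof of this conjecture for general triangle-free graphs, only a partial argument for \emph{bipartite} graphs, obtained by setting $\chi(G)=2$ in the Ando--Lin bound (1) to get $s^+\le m$ and then writing $\sqrt{s^+}\le\sqrt{m}\le m/\mu\le R(G)$. Your route instead isolates the inequality $\mu^2\le s^-$ for triangle-free graphs, proved from $tr(A^3)=6t(G)=0$ via
\[
\mu_1^3\le\sum_{\mu_i>0}\mu_i^3=\sum_{\mu_i<0}|\mu_i|^3\le|\mu_n|\,s^-\le\mu_1 s^-,
\]
which is exactly the hypothesis needed to run the chain of Theorem 3 with the roles of $s^+$ and $s^-$ interchanged; the resulting $\mu\sqrt{s^+}\le m$ together with Lemma~\ref{fav-lem} gives the bound for \emph{all} triangle-free graphs, including non-bipartite ones such as $C_5$ or the Petersen graph, which the paper's argument cannot reach. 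Your equality analysis is also sound: equality forces $\mu^2=s^+$, Smith's theorem then gives a complete multipartite graph, triangle-freeness collapses this to $K_{a,b}$, and a direct check confirms $R(K_{a,b})=\sqrt{ab}=\sqrt{s^+(K_{a,b})}$. The only caveat is cosmetic: in the disconnected case your subadditivity step is strict unless all but one component are edgeless, so equality actually characterizes $K_{a,b}$ together with possible isolated vertices; the paper sidesteps the analogous issue by stating its Theorem 3 only for connected graphs.
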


We can prove this conjecture for bipartite graphs, using (1), as follows:

\[
\mu^2 \le s^+ \le \frac{2m(\chi(G) - 1)}{\chi(G)} = m.
\]

Therefore using Lemma 1

\[
\sqrt{s^+} \le \sqrt{m } \le \frac{m}{\mu} \le R(G).
\]

\section{Conclusion}

Csikv\'ari \cite{csikvari09} used the Kelmans transformation to prove that $ \mu(G) + \mu(\overline{G}) \le (1 + \sqrt{3})n/2 - 1$ and Terpai \cite{terpai11} used a graphon equivalent of the Kelmans transformation to prove his bound. We do not know if there exists an equivalent of the Kelmans transformation for $\sqrt{s^+}$.

\medskip

It should be noted, that if $G'$ is a subgraph of $G$ then $\mu(G') \le \mu(G)$. However Godsil \cite{godsil11} has searched all graphs on 9 vertices and found 5 for which $s^+(G') > s^+(G)$. This is indicative of the difficulty of proving results using $s^+$.

\end{document}